    \newtheorem{theorem}{Theorem}[section]
    \newtheorem{proposition}[theorem]{Proposition}
    \newtheorem{remark}[theorem]{Remark}
\newcommand{\C}{{\mathbb C}}
\newcommand{\F}{{\mathbb F}}
\newcommand{\Q}{{\mathbb Q}}
\newcommand{\R}{{\mathbb R}}
\newcommand{\Z}{{\mathbb Z}}
\newcommand{\BBp}{{\mathbb B}_{p, \infty}}
\newcommand{\WW}{{\mathbb W}}
\newcommand{\Fbar}{{\overline{\F}}}
\newcommand{\phibar}{{\overline{\phi}}}
\newcommand{\Phibar}{{\overline{\Phi}}}
\newcommand{\Dtilde}{{\widetilde{D}}}
\newcommand{\Ktilde}{{\widetilde{K}}}
\newcommand{\Ftilde}{{\widetilde{F}}}
\newcommand{\Phitilde}{{\widetilde{\Phi}}}
\newcommand{\pp}{{\mathfrak p}}
\newcommand{\calC}{{\mathcal C}}
\newcommand{\calD}{{\mathcal D}}
\newcommand{\calM}{{\mathcal M}}
\newcommand{\calO}{{\mathcal O}}
\newcommand{\calR}{{\mathcal R}}
\newcommand{\calT}{{\mathcal T}}
\newcommand{\calU}{{\mathcal U}}
\newcommand{\calY}{{\mathcal Y}}
\newcommand{\calZ}{{\mathcal Z}}
\newcommand{\OO}{{\mathcal O}}
\newcommand{\Star}{\textasteriskcentered}
\DeclareMathOperator{\im}{Im}
\DeclareMathOperator{\divv}{div}
\DeclareMathOperator{\ord}{ord}
\DeclareMathOperator{\Jac}{Jac}
\DeclareMathOperator{\M}{M}
\DeclareMathOperator{\Sp}{Sp}
\DeclareMathOperator{\SL}{SL}
\DeclareMathOperator{\Tr}{Tr}
\DeclareMathOperator{\Disc}{Disc}
\DeclareMathOperator{\Norm}{N}
\DeclareMathOperator{\Sym}{Sym}
\DeclareMathOperator{\End}{End}
\DeclareMathOperator{\Hom}{Hom}
\DeclareMathOperator{\Aut}{Aut}
\begin{document}

	\title[Igusa class polynomials]{Igusa class polynomials, 
		embeddings of quartic CM fields, and arithmetic intersection theory}
	\author[Grundman, Johnson-Leung, Lauter, Salerno, Viray, Wittenborn]{
				Helen Grundman, Jennifer Johnson-Leung,
			 	Kristin Lauter, Adriana Salerno, Bianca Viray, Erika Wittenborn}
	
	\begin{abstract}
		Bruinier and Yang conjectured a formula for an intersection number on the arithmetic Hilbert modular surface, $\calC\calM(K).T_m$, where $\calC\calM(K)$ is the zero-cycle of points corresponding to abelian surfaces with CM by a primitive quartic CM field K, and $T_m$ is the Hirzebruch-Zagier divisors parameterizing products of elliptic curves with an $m$-isogeny between them. In this paper, we examine fields not covered by Yang's proof of the conjecture. We give numerical evidence to support the conjecture and point to some interesting anomalies. We compare the conjecture to both the denominators of Igusa class polynomials and the number of solutions to the embedding problem stated by Goren and Lauter.
	\end{abstract}
	
	\maketitle

	\section{Introduction}\label{sec:intro}
		
		Hilbert class polynomials are polynomials whose zeros are $j$-invariants of elliptic curves with complex multiplication (CM).  Similarly, Igusa class polynomials are polynomials whose zeros are Igusa invariants of genus 2 curves whose Jacobians have CM by a quartic CM field $K$.  Igusa class polynomials can be hard to compute, mostly because recovering the coefficients from approximations requires a bound on the denominators. A precise description of the denominators would greatly improve the estimated running time of current algorithms to compute Igusa class polynomials~\cite{Spallek, vW, Weng, EL, GHKRW}.		

		The first attempt to understand the primes appearing in the factorization of the denominators was given in~\cite{Lauter}, where it was conjectured that the primes are bounded by the absolute value of the discriminant $d_K$ of the field $K$, and that the primes divide $d_K - x^2$, for some integer $x$.  The divisibility condition actually follows from work of Goren~\cite{Gor97} along with the observation that primes in the denominators are primes of superspecial reduction for the abelian surfaces with CM by $K$. 
In~\cite{GL}, Goren and Lauter proved a bound on the primes which appear in the factorization of the denominators.  In recent work~\cite{GL10}, they have given a bound on the denominators (i.e. bounding the powers of the primes which appear); however, this bound is not sharp.
		
		Igusa invariants can be expressed as rational functions in Eisenstein series and cusp forms.  Using this description it can be shown that the primes dividing the denominators are (up to cancellation) the primes $p$ such that there exists a genus $2$ curve $C$ with CM by $K$ and a prime ideal $\pp | p$ with
		\[
			\Jac(C) \cong E \times E' \pmod \pp.
		\]
		Goren and Lauter obtained the bound on such primes by showing that these primes are exactly the primes for which there is a solution to the {\it embedding problem}: the problem of giving an embedding $\calO_K \hookrightarrow \End(E \times E') \subset M_2(\BBp)$, where $E$, $E'$ are supersingular elliptic curves over $\Fbar_p$ and $\BBp$ is the quaternion algebra over $\Q$ ramified at only $p$ and $\infty$.  Such decomposable Jacobians can be viewed as points on the Siegel moduli space where the arithmetic intersection of the CM($K)$-cycle with the divisor of the cusp form $\chi_{10}$ is nonzero at $p$.  
		
		This arithmetic intersection number on the Siegel moduli space can be related to a sum of intersection numbers of Hirzebruch-Zagier divisors and CM($K$)-cycles on the Hilbert modular surface
associated to the real quadratic subfield of $K$.  
In~\cite{BY}, Bruinier and Yang conjectured a formula for the intersection number $\mathcal{CM}(K).\calT_m$,  where $\mathcal{CM}(K)$ parameterizes abelian surfaces with complex multiplication by a primitive quartic CM field $K$, and $\calT_m$ is a Hirzebruch-Zagier divisor parameterizing products of elliptic curves with an $m$-isogeny between them when $m$ is split in the real quadratic field.  The results of Bruinier and Yang can be viewed as a generalization of the celebrated formula of Gross and Zagier~\cite{GZ-SingularModuli}. 
Bruinier and Yang conjectured their formula for these arithmetic intersection numbers under the assumption that the real quadratic subfield of $K$ has prime discriminant.  Their formula relates the intersection numbers to the number of ideals of a certain norm in the reflex field of $K$.  In~\cite{Yang1, Yang2}, Yang gives a detailed treatment of the embedding problem and uses it, along with other techniques, to prove the conjectured intersection formula under certain conditions on the CM field $K$. 
	
		For a given CM field $K$, the conjectural formula of Bruinier and Yang is straightforward to compute.  Thus, if the Bruinier-Yang formula held for all $K$, the formula would give a way to compute the denominators precisely (up to cancellation).

		In this paper, we compare the number of solutions to the embedding problem, the denominators in the Igusa class polynomials, and the conjectural formula of Bruinier and Yang for the 13 CM fields found in~\cite{vW}. Only 6 of these fields are covered by Yang's proof of the conjecture.  We give numerical evidence to support the conjecture and point to some interesting anomalies and some possible explanations for these discrepancies.  
Such a project was suggested in the Introduction and Section 9 of~\cite{BY}, and we give here the first numerical evidence comparing the conjecture with denominators of Igusa class polynomials and counting solutions to the embedding problem.  This comparison required developing an algorithm to count the number of solutions to the embedding problem for a given primitive quartic CM field $K$ and prime number $p$.  The algorithm is given in Section~\ref{algorithm} and the \texttt{Magma} code for computing the values is given in the Appendix.  
	
		The paper is organized as follows.	Section~\ref{sec:CMAbelSurf} contains background on abelian surfaces with complex multiplication by a primitive quartic CM field $K$.  Section~\ref{sec:igusa_inv} contains a description of Igusa invariants and Igusa class polynomials.  It also gives a brief introduction to the algorithms for computing Igusa class polynomials and their connection with generating genus 2 hyperelliptic curves for cryptography. 
Section~\ref{sec:intersection-theory} introduces arithmetic intersection theory on the arithmetic Hilbert moduli space and explains how it relates to the denominators of Igusa class polynomials.  In this section, we also describe the Bruinier-Yang conjecture and the cases in which it is known.  Section~\ref{sec:emb} explains how the denominators of Igusa class polynomials relate to the embedding problem.  Section~\ref{sec:Examples} gives an algorithm to count the number of solutions to the embedding problem. Section~\ref{sec:numerical-data} presents our numerical data and analysis of the findings.
	
		This paper is concerned with computing these intersection numbers from three different points of view: denominators of Igusa class polynomials, arithmetic intersection numbers via the conjectured Bruinier-Yang formula, and embeddings of a CM number field into a matrix algebra over a quaternion algebra.  Thus the paper necessarily requires the introduction of a number of objects from different points of view, and the background is spread over many different references and papers.  Therefore, for the readers' convenience, we have tried to include here as much background as possible.  Most sections are self-contained, but Section~\ref{sec:intersection-theory} leaves some of the background material to the references.	
		
	\section{Abelian surfaces with CM by $\OO_K$}\label{sec:CMAbelSurf}

		A {\it CM field} is a totally imaginary quadratic extension of a totally real number field of finite degree.  For a CM field $K$, let $K^+$ denote the real quadratic subfield of $K$.  If $\{\phi_1, \dots, \phi_{2n} \}$ are the $2n$ complex embeddings of $K$, then a {\it CM-type} $\Phi$ for $K$ is a choice of $n$ distinct complex embeddings, no two of which are complex conjugates of one another. When $K^+ = \Q$, CM fields are imaginary quadratic fields. 
	
		An {\it abelian variety with CM by $K$} is a pair $(A, \iota)$ where $A$ is an abelian variety and $\iota$ is an isomorphism of $K$ into $\End^0(A) := \End(A)\otimes \Q$.  We say $(A, \iota)$ has type $(K,\Phi)$ if $A$ has CM by $K$ and the analytic representation of $\End^0(A)$ is equivalent to the direct sum of the $n$ isomorphisms $\phi_i \in \Phi$.  One can define a CM-type to be {\it primitive} if all of the abelian varieties over $\C$ of that type are {\it simple}, i.e.  have no abelian subvarieties other than $\{0\}$ and itself.  A CM-type $(K,\Phi)$ is primitive if and only if $K=K^+(\xi)$, where $\xi$ is an element of $K$ such that $\xi^2$ is totally negative and $\im(\xi^{\phi_i}) > 0$ for $i=1, \dots, n$~\cite[p. 61]{Shimura}.  For quartic CM fields, the only case where non-primitive CM-types arise is the case when $K$ is biquadratic~\cite[p. 64]{Shimura}.  We will restrict ourselves to the primitive (i.e. non-biquadratic) case in this article.
	
		Let $(K,\Phi)$ denote a quartic CM field $K$ with CM-type $\Phi$. Define the reflex field of $(K,\Phi)$ to be
		\[
			K_\Phi = \Q\left(\sum_{\phi\in\Phi} x^\phi \mid x \in K\right).
		\]
		If $K$ is Galois cyclic, then we have $K=K_\Phi$ (\cite[\S 8.4]{Shimura}).
		
		Shimura gives a construction of a principally polarized complex abelian variety of a given CM type in~\cite[\S 6.1, Theorem 2]{Shimura}: 
to any ideal $I \subseteq \OO_K$, associate the abelian variety $A_I = \C^2/\Phi(I)$ of dimension~2. It has endomorphism ring $\OO_K$ and we say it has {\it CM by $\OO_K$}. All abelian varieties with CM by $\OO_K$ are obtained via this construction.  A {\it principal polarization} of $A_I$ is an isomorphism between $A_I$ and its {\it dual\/} $\widehat{A_I} = \C^2/\Phi(\overline{I}^{-1} \calD_K^{-1}),$ where 
		\[
			\calD_K^{-1} = \{x \in K \mid \Tr_{K/\Q}(x\OO_K) \subseteq \Z \}
		\]
		is the inverse different.  If $\pi$ is a totally imaginary element of $K$ which satisfies $\Phi(\pi) \in(i\R_{>0})^2$ and $\pi I \overline{I} = \calD_K^{-1}$, then the map $A_I \rightarrow \widehat A_I$ given by
		\[
			(z_1,z_2) \mapsto (\phi_1(\pi) z_1, \phi_2(\pi) z_2)
		\]
		is an isomorphism (\cite[p. 102--104]{Shimura}).

		Let $K$ be a fixed primitive quartic CM-field and fix a CM-type $\Phi: K \rightarrow \C^2$. Let $A/\C$ be a principally polarized abelian surface of CM-type $(K, \Phi)$ with endomorphism ring isomorphic to $\OO_K$. 
We define a group  $\mathfrak{C}(K) = \{ (\frak{a},\alpha) \}/\sim$ where $\frak{a}$ is a fractional ideal of $\OO_K$ such that $\frak{a}\overline{\frak{a}} = (\alpha)$, with $\alpha\in K^+$ totally positive.
Two pairs $(\frak{a},\alpha)$ and $(\frak{b},\beta)$ are equivalent if and only if there exists a unit $u\in K^*$ with $\frak{b} = u\frak{a}$ and $\beta = u\overline u \alpha$. The multiplication is defined componentwise, and $(\OO_K,1)$ is the identity element of $\mathfrak{C}(K)$.
		
		The group $\mathfrak{C}(K)$ naturally acts on the finite set $S(K,\Phi)$ of isomorphism classes of principally polarized abelian surfaces that have CM by $\OO_K$ of a given type $\Phi$. Every principally polarized abelian variety with CM by $K$ has an analytic representation $A_I$ determined by an ideal $I$ and a $\Phi$-positive element $\pi\in K$ giving the principal polarization. We now put
		\[
			(\frak{a},\alpha) \cdot (I,\pi) = (\frak{a} I, \alpha\pi)
		\]
		for $(\frak{a},\alpha) \in \mathfrak{C}(K)$. By~\cite[\S 14.6]{Shimura}, for a given $\Phi$ the action of $\mathfrak{C}(K)$ on $S(K,\Phi)$ is transitive and free. In particular, we have $|\mathfrak{C}(K)| = |S(K,\Phi)|$, and the sets are often identified.
		
		Abelian surfaces $A$ of type $(K, \Phi)$ defined by an ideal $\frak{a}$ have potentially zero, one, or two principal polarizations. 
There are zero principal polarizations only if there is no totally positive element $\alpha \in \OO_{K_0}$ with $\frak{a}\overline{\frak{a}} = (\alpha)$.
Otherwise there are $[U:U_1]$ principal polarizations on $A$, where $U$ is the group of all totally positive units of $K_0$, and $U_1$ is the subgroup of norms of units from $K$. 
		
		For quartic primitive CM fields $K$, there are two different possible CM types up to complex conjugation.  Let $\rho$ be complex conjugation, and let $\phi$ be a non-trivial embedding of $K$ into $\C$ other than complex conjugation.  A priori, the 4 possible CM types are: $\Phi = (1, \phi)$, $\Phibar = (\rho,\rho\phi)$, $\Phi'=(1, \phibar)$, $\Phibar' = (\rho,\rho\phibar)$. We denote by $K_\Phi$ (resp. $K_{\Phi'}$, etc.) the set of principally polarized abelian varieties of CM type $\Phi$ (resp. $\Phi'$, etc.).
		
		The description above identifies isomorphism classes of principally polarized abelian varieties of a given type $\Phi$ with the set $\{(\frak{a},\alpha)\}/\sim$, where $\frak{a}$ is an ideal of $\OO_K$ such that $\frak{a}\overline{\frak{a}} = (\alpha)$, $\alpha$ a totally positive element of $\OO_{K_0}$.  The associated abelian variety is $A=\C^2/\Phi(\frak{a})$ and the embedding of $\OO_K$ into $\End(A)$ sends $\beta \in \OO_K$ to the diagonal matrix with $(\beta^{\phi_1}, \beta^{\phi_2})$ on the diagonal acting by multiplication on $\C^2$.
		
		We can show that $K_\Phi = K_\Phibar$ by sending $(\frak{a},\alpha)$ to $(\overline{\frak{a}},\alpha)$. For the same reason, $K_{\Phi'} =  K_{\Phibar'}$.  If $K$ is cyclic, then $\phi^2$ is complex conjugation and sending $(\frak{a},\alpha)$ to $(\frak{a}^\phi,\alpha^\phi)$ shows that $K_{\Phi} =  K_{\Phi'}$.

	\section{Genus 2 curves with CM}\label{sec:igusa_inv}
	
		In cryptography, it is useful to be able to construct genus 2 curves over a finite field $\F_q$ with a given number of points on its Jacobian, since they can be used to implement discrete-log cryptosystems. Many algorithms to generate such curves have been developed using complex multiplication, all of which require computing certain invariants first studied by Igusa~\cite{IgusaGenus2}. The closely related Igusa class polynomials have rational coefficients. The known algorithms for computing Igusa class polynomials all require a good bound on the primes appearing in the denominators of these invariants.  One of the main goals of this project is to better understand exactly which primes appear in those denominators. In this section, we will review the definitions of the Igusa invariants and explain several algorithms for computing Igusa class polynomials.
		
		\subsection{Generating genus 2 curves for cryptography}
		\label{sec:crypto}
	
			To construct a genus 2 curve $C$ over a finite field $\F_q$ with a given number of points $N$ on its Jacobian, one method is to compute a related quartic CM field $K$ such that a curve $C$ with $\#J(C)(\F_q)=N$ has CM by $K$.

			For an ordinary genus 2 curve $C$ over a prime field $\F_q$, let $N_1 = \#{C}(\F_q)$ and $N_2= \#{C}(\F_{q^2})$.  Then 
			\begin{equation} \label{Jacpoints}
				\#J({C})(\F_q) = (N_1^2 + N_2)/2 - q.
			\end{equation}
			To find a curve $C$ over $\mathbb{F}_q$ such that $\#J(C)=N$, first find $N_1$ and $N_2$ in the Hasse-Weil intervals for $\mathbb{F}_q$ and $\mathbb{F}_{q^2}$ satisfying relation (\ref{Jacpoints}).  Next, set 
			\[
				s_1 := q + 1 - N_1
			\] 
			and 
			\[
				s_2 := \frac{1}{2}\left(s_1^2 + N_2 - 1 - q^2\right).
			\]
			Then the quartic polynomial $f(t) = t^4 - s_1t^3 + s_2t^2 - qs_1t + q^2$ is the Weil polynomial of a genus 2 curve as long as the exceptional cases listed in~\cite[Theorem 1.2]{HNR} are avoided.  Note that if $s_2$ is prime to $q$, then the Jacobian is ordinary~\cite[p.2366]{Howe}.  Under those conditions and assuming that $f(t)$ is an irreducible polynomial in $\Q[t]$, the Jacobian of the curve has endomorphism ring equal to an order in the quartic CM field $K= \Q[t]/(f(t))$.  Also, if $K$ can be written in the form $K=\Q(i\sqrt{a+ b\sqrt{d}})$, with $a,b,d \in \Z$ and $d$ and $(a,b)$ square-free, then $K$ is a primitive CM field if and only if $a^2-b^2d$ is not a square~\cite[p.135]{KW}.
		
			To find a curve with CM by $K$, one method is to compute its Igusa invariants by evaluating certain Siegel modular functions at CM points associated to $K$ in the Siegel moduli space.  Igusa invariants will be explained in the next section.
		
		\subsection{Igusa invariants and Igusa class polynomials}
	
			Recall that the $j$-invariant of an elliptic curve can be calculated in two ways. One can compute it as a value of the modular $j$-function on a lattice defining the elliptic curve as a complex torus over $\mathbb{C}$, or directly from the coefficients of the equation defining the elliptic curve. Similarly, the three Igusa invariants of a genus 2 curve can be calculated in two different ways.
			
			Let $y^2 = f(x)$ be a hyperelliptic curve, where $f(x)$ is a sextic with roots $\alpha_1,\dots,\alpha_6$ and leading coefficient $a_6$. Igusa defined invariants $A,B,C,D$ as certain symmetric functions of the roots~\cite{IgusaGenus2}. For example, $D=a_6^{10}\prod_{i<j}(\alpha_i-\alpha_j)^2$ is the discriminant. The ring of rational functions of the coarse moduli space for hyperelliptic curves of genus 2 is generated by the absolute Igusa invariants, which can be defined as:
			\begin{equation}\label{eqn:igusa-disc}
			 	i_1 := \frac{A^5}{D}, \quad i_2 := \frac{A^3B}{D},
				\quad i_3 := \frac{A^2C}{D}.
			\end{equation}
			This choice of generators is not unique. These invariants are a generalization of the $j$-invariant for elliptic curves in the sense that, if $i_1$ is non-zero and the characteristic is not $2$ or $3$, these invariants agree for two smooth genus $2$ curves if and only if the two curves are isomorphic over an algebraically closed field. Note that if $f(x)$ defines a smooth genus $2$ curve, then $i_1, i_2, i_3$ are well defined.
	
			The invariants can also be defined in terms of modular functions on the Siegel moduli space.  In~\cite[p. 195]{Igusa62}, Igusa defined normalized Siegel modular cusp forms $\chi_{10}$ and $\chi_{12}$ of weights $10$ and $12$ in terms of the Siegel-Eisenstein series $E_w$ for $w = 4,6,10,12$.  There he used the term {\em normalized} to mean that the leading coefficient of the Fourier expansion is $1$.  For our purposes, we will want to work with modular forms that have integral Fourier coefficients which are relatively prime. 
By working with $4\chi_{10}$ and $12\chi_{12}$, we obtain modular forms normalized in this second sense.

			In~\cite[Section 5.2]{GL}, it was noted that the absolute Igusa invariants defined above, $i_1$, $i_2$, $i_3$, can also be expressed in terms of these Eisenstein series and cusp forms as follows:
			\[
				i_1 = 2\cdot3^5\frac{\chi_{12}^5}{\chi_{10}^{6}}, \quad
				i_2 = 2^{-3}\cdot3^3\frac{E_4\chi_{12}^3}{\chi_{10}^4}, \quad
				i_3 = 2^{-5}\cdot3\left( \frac{E_6\chi_{12}^2}{\chi_{10}^{3}} 
					+ 2^2\cdot3\frac{E_4\chi_{12}^3}{\chi_{10}^4} \right),
			\]
			(where the parentheses in the expression for $i_3$ were mistakenly left out in~\cite{GL}).  It was also remarked there that it follows from the formulas given by Igusa in~\cite[p. 848]{Igusa1} that these  $i_1$, $i_2$, $i_3$ coincide with the invariants defined in terms of theta functions and used for computation by~\cite{vW, Weng, Lauter}.  However, it is worth noting that in order to work with invariants which are quotients of normalized modular forms in the second sense, one should actually work with $2^{-3}i_1$, $2i_2$, $2^3i_3$ as Spallek did~\cite{Spallek}.

			{\it Igusa class polynomials} are the genus $2$ analogue of Hilbert class polynomials.  Just as we need three Igusa invariants to define an isomorphism class instead of one $j$-invariant, we have a triple of Igusa class polynomials for a CM field $K$ instead of one Hilbert class polynomial.  This triple of polynomials is defined as follows.
			\begin{equation} \label{Igusapoly}
			 	H_{\ell}(X):=\prod_{\tau}(X-i_{\ell}(\tau)), \quad \ell = 1,2,3,
			\end{equation}
			\noindent where the $i_{\ell}$ are the absolute Igusa invariants defined above, and the product is taken over all $\tau \in \Sp(4,\Z)\backslash\mathfrak{h}_2$ such that the associated principally polarized abelian variety has CM by $\OO_K$.  Igusa class polynomials have rational coefficients~\cite[Satz 5.8]{Spallek} (as opposed to integral coefficients as in the case of Hilbert class polynomials).  In the next section, we describe how this plays a key role in the difficulty of computing these polynomials.

		\subsection{Algorithms for computing Igusa class polynomials}
		\label{subsec:generating-curves}
			
			The complex analytic method for computing genus 2 curves with a given number of points on the Jacobian is analogous to the Atkin-Morain algorithm for elliptic curves~\cite{AM}. Recall that the Atkin-Morain algorithm computes the Hilbert class polynomial of an imaginary quadratic field $K$ by evaluating the modular $j$-invariants of all elliptic curves with CM by $K$ to high precision.  The complex analytic method~\cite{Spallek, vW, Weng} for generating genus 2 curves takes as input a primitive quartic CM field and evaluates the Igusa invariants to high precision to form the class polynomials with coefficients in $\Q$.  Computing the Igusa invariants to high precision is difficult to do efficiently.  To give an algorithm requires a bound on the denominators of the class polynomials, and no such bound was known at the time of those papers.

			Given a triple of roots of these polynomials, Mestre gave an algorithm to recover the equation of a genus 2 curve from its invariants~\cite{Mestre}.  So to find a genus 2 curve, first find roots of the polynomials modulo a prime number $p$ and then use Mestre's algorithm to construct a genus 2 curve over $\mathbb{F}_p$ whose Jacobian has CM by $K$. 
			
			In~\cite{EL}, Lauter and Eisentr\"{a}ger present a {\em CRT algorithm}, an alternative to the complex analytic algorithm to compute the Igusa class polynomials. The CRT algorithm takes as input a quartic CM field $K$ \emph{and} a constant $c$ such that all denominators divide $c$.  The algorithm first computes the Igusa class polynomials modulo small primes $\ell$ by finding all curves with the right zeta function and the right endomorphism ring modulo $\ell$. It then forms the three polynomials whose roots are the invariants of those curves. Finally, it uses the Chinese Remainder Theorem (CRT) and the constant $c$ to obtain the Igusa class polynomials with coefficients in $\Q$.

			There is another alternative approach to computing Igusa class polynomials using $p$-adic arithmetic (\cite{GHKRW}).  The method is similar to the complex analytic algorithm, except the values are calculated to a high $p$-adic precision, instead of a complex precision.  This algorithm also requires a bound on the denominators.

			Notice that all three algorithms require a bound on the denominators, and their running time is directly affected by the sharpness of the bound. In the complex analytic and $p$-adic algorithms, it is required to bound the amount of precision needed in the approximation of the $j_i$-values.  In the CRT algorithm, a common multiple of the denominators is needed.  Thus, an exact formula for the denominators would be very useful. 

	\section{Arithmetic intersection theory and denominators of Igusa class 
	polynomials}\label{sec:intersection-theory}
		
		In this section we describe how the denominators of Igusa class polynomials are related to arithmetic intersection numbers on the Siegel moduli space of principally polarized abelian surfaces and explain how this relates to intersection numbers on the Hilbert modular surface and the conjectural formula of Bruinier and Yang for the intersection number $\calC\calM(K)$.$\calT_m$. 

		It is a classical fact that the zero locus of $\chi_{10}$ on the coarse moduli space of abelian surfaces consists of exactly those abelian surfaces that decompose as a product of elliptic curves with the product polarization.  The arithmetic analogue of this statement was proved in~\cite[Cor 5.1.2]{GL}, that if a prime $\frak{p}$ divides the denominator of $(f/(4\chi_{10})^k)(\tau)$, for $\tau$ a CM point corresponding to a smooth curve $C$ and $f$ a Siegel modular form of weight $10k$ with integral Fourier coefficients with gcd $1$, then $C$ has bad reduction modulo $\frak{p}$.

		The absolute Igusa invariants are defined above as rational functions of the Siegel modular forms $\chi_{10}$, $\chi_{12}$,  $\psi_4$, and $\psi_6$, with only powers of $\chi_{10}$ in the denominators. Although this definition of the invariants is given in terms of genus $2$ curves over $\C$, the Igusa invariants can also be defined for genus $2$ curves over finite fields (as in formula~\ref{eqn:igusa-disc} above).  Furthermore, the Igusa invariants evaluated at CM points are algebraic numbers and can be reduced modulo suitable primes.  Avoiding bad primes in the denominators, we have~\cite[Thm 2]{EL} 
		\[
			i_{\ell}(C \bmod \pp) = i_{\ell}(C) \bmod \pp.
		\]
		Thus the valuation of the denominator of the constant term of $H_1$ at a prime $p$ is the sum of the order of the poles of $2\cdot3^5\frac{\chi_{12}^5}{\chi_{10}^{6}}$ at the CM-points over $\Fbar_p$.  We have an analogous statement for the Igusa invariants $H_2$ and $H_3$.  Since $\chi_{12}, \psi_4$, and $\psi_6$ have no poles, the orders of the poles of $i_{\ell}$ are bounded above by suitable multiples of the order of zeros of $\chi_{10}$.  Experimentally, we observe that usually $\chi_{12},  \psi_4$, and $\psi_6$ do not share zeros of high multiplicity with $\chi_{10}$, so the order of the poles should be close to the order of zeros of a suitable power of $\chi_{10}$. Computing the order of zeros of $\chi_{10}$ is equivalent to computing the arithmetic intersection number  $\divv(\chi_{10}).\calC\calM(K)$ (explained below) of the divisor of $\chi_{10}$ with the cycle of CM points associated to $K$.

		To compute the arithmetic intersection number $\divv(\chi_{10}).\calC\calM(K)$, first note that, up to a power of 2, $\chi_{10} = \Psi_{1, \textup{Siegel}}^2$  (\cite[Remark 9.3]{BY}), where $\Psi_{1, \textup{Siegel}}$ is the normalized Siegel-Borcherds product of discriminant $1$ whose divisor is $G_1$, the Humbert surface of discriminant $1$.  Thus at all primes except possibly $2$, we need to compute $2G_1.\calC\calM(K)$. The Humbert surface $G_1$ can be pulled back to a sum of Hirzebruch-Zagier divisors $T_m$ on the Hilbert moduli space, and so the quantity to be computed can be expressed as a sum of 
intersection numbers of the form $\calC\calM(K)$.$\calT_m$ on the Hilbert moduli space. 

		We now explain all of this in more detail.  The presentation in the rest of this section follows closely the treatment in~\cite{Yang1, Yang2, BBK}.  For further background, good references are the books of~\cite{vdG, Goren, KRY}.
In Section~\ref{subsec:Hilbert}, we describe the Hilbert moduli space and the moduli interpretation of Hirzebruch-Zagier divisors and CM-cycles. In Section~\ref{intersection}, we explain arithmetic intersection theory and the meaning of  $\calC\calM(K)$.$\calT_m$.  In Section~\ref{subsec:relation}, we explain the	relationship between the Humbert surface and Hirzebruch-Zagier divisors.  In Section~\ref{subsec:BY}, we describe the Bruinier-Yang conjectural formula for these intersection numbers.

		\subsection{The Hilbert moduli space and Hirzebruch-Zagier divisors}
		\label{subsec:Hilbert}
			Fix $F$, a real quadratic extension of $\Q$ with different $\calD_F$.  The Hilbert moduli space parametrizes principally polarized abelian surfaces $A$ with {\it real multiplication by $\OO_F$}, i.e. with a fixed embedding $\iota\colon \OO_F \hookrightarrow \End(A)$.  More precisely, the arithmetic Hilbert moduli space $\calM$ is the moduli stack that associates a scheme $S$ over $\Z$ to the set of triples $(A, \iota, \lambda)$ where
				\begin{enumerate}
					\item $A$ is an abelian 
							surface over $S$, 
					\item $\iota \colon \OO_F \hookrightarrow \End_S(A)$ gives 
						the structure of real multiplication, and
					\item $\lambda \colon \calD_F^{-1} \rightarrow
					 	\Hom_{\OO_F}(A, A^\vee)^{\Sym}$ is a homomorphism of 
						$\OO_F$-modules that takes totally positive elements to 
						$\OO_F$-linear symmetric polarizations and satisfies the 
						Deligne-Pappas condition that $\calD_F^{-1} \otimes A 
						\rightarrow A^\vee$ is an isomorphism of abelian 
						schemes.
				\end{enumerate}
				The $\C$-points of $\calM$ can be identified with the usual Hilbert moduli space $M=\SL_2(\OO_F) \backslash \mathfrak{H}^2$ described in the article of Grundman in this volume~\cite{Grundman}.

			Now suppose that $m$ is a prime number that splits in $F$.  Let $Y_0(m) = \Gamma_0(m) \backslash \mathfrak{H}$ be the modular curve parameterizing pairs of elliptic curves with an $m$-isogeny.  Let $\calY_0(m)$ be the moduli stack over $\Z[1/m]$ of pairs of elliptic curves with an $m$-isogeny.  Bruinier, Burgos Gil, and K\"uhn~\cite[Prop 5.11, 5.13]{BBK} construct a map from $\calY_0(m)$ into $\calM$.  They prove that the image of the minimal compactification of  $\calY_0(m)$ is the Zariski closure of the Hirzebruch-Zagier divisor, $\calT_m$. 

			Let $K$ be a primitive quartic CM number field with $K^+ = F$.  Let $\calC\calM(K)$ be the moduli stack over $\Z$ that represents the moduli problem assigning a base scheme $S$ to the set of triples $(A, \iota, \lambda)$, where $\iota\colon \OO_K \rightarrow \End_S(A)$ gives the $\OO_K$-action on $A$ such that the Rosati involution defined by $\lambda$ induces complex conjugation on $\OO_K$, and $(A, \iota|_{\OO_F}, \lambda) \in \calM(S)$. Bruinier and Yang also refer to the direct image of $\calC\calM(K)$ in $\calM$ by $\calC\calM(K)$.

		\subsection{Arithmetic Intersection Theory} \label{intersection}
			Following~\cite[Chapter 2]{KRY} and~\cite{Yang1, Yang2}, we define the arithmetic intersection number of two cycles on a Deligne-Mumford stack of dimension $n$ that is proper and flat over $\Z$.  Let $\calZ_1$ and $\calZ_2$ be two cycles that intersect properly, i.e. their codimensions are $r_1$ and $r_2$, with $r_1+r_2 = n$, and $\calZ_1 \cap \calZ_2 = \calZ_1 \times_\calM \calZ_2$ is a Deligne-Mumford stack of dimension $0$.  Then the arithmetic intersection number (written logarithmically) is
		\begin{align*}
			\calZ_1.\calZ_2 & = \sum_{p} 
				\sum_{x \in \calZ_1 \cap \calZ_2(\Fbar_p)}
				\frac{1}{\# \Aut(x)} \log \# 
				\widetilde{\OO}_{\calZ_1 \cap \calZ_2, x}\\
				& = \sum_{p} \sum_{x \in \calZ_1 \cap \calZ_2(\Fbar_p)}
				\frac{1}{\# \Aut(x)} i_p(\calZ_1, \calZ_2, x ) \log p,
		\end{align*}
		where $\widetilde{\OO}_{\calZ_1 \cap \calZ_2, x}$ is the local ring of  $\calZ_1 \cap \calZ_2$ at $x$, and $i_p(\calZ_1, \calZ_2, x ) = \mbox{Length } \tilde{\OO}_{\calZ_1 \cap \calZ_2, x}$ is the local intersection at $x$.
		
			We are interested in the arithmetic intersection of the cycles ${\calC\calM}(K)$ and $\calT(m)$ on the moduli stack $\calM$.  Using the moduli interpretation of the cycles $\calC\calM(K)$ and $\calT_m$, it is clear that a geometric point $x$ in $\calC\calM(K)\cap \calT_m = \calC\calM(K) \times_\calM \calT_m$ is indexed by a pair $(\phi\colon E \rightarrow E', \iota)$, where $\phi \in \calY_0(m)(\overline{\F_p})$ and $\iota\colon \OO_K \rightarrow \End_{\OO_F}(B)$ is an embedding such that the Rosati involution associated to $\lambda$ induces complex conjugation on the image of $K$ and where $(B, \lambda)$ is the polarized abelian surface in $\calT_m$ which is the image of $\phi$. Denote by $i_p(\phi, \iota )=i_p(\calC\calM(K), \calT(m),x)$ the local intersection number at a point $x = (\phi, \iota)$.  It follows from deformation theory that $i_p(\phi, \iota ) = \mbox{Length} \left(\WW[[t,t']]/I\right)$,  where $\WW$ is the Witt ring of $\Fbar_p$ and $I$ is the minimal ideal such that $\phi$ can be lifted to an isogeny modulo $I$ and the embedding $\iota$ can be lifted to an embedding modulo $I$~\cite[Eq. (3.7)]{Yang2}.

		\subsection{The relationship between the Hilbert and Siegel moduli 
			spaces}\label{subsec:relation}
			Let $M$ be the Hilbert modular surface associated to the real quadratic field of discriminant $D$, $F=\mathbb{Q}(\sqrt{D})$. Recall that forgetting real multiplication gives a morphism $\phi_D$ from the Hilbert moduli space $M$ to the Siegel moduli space $X=\Sp(4,\mathbb{Z})\backslash\mathfrak{h}_2$. The image $G_D\subset X$ of $M$ is known as the Humbert surface of discriminant $D$.  For every $G_D$, there is a normalized integral Siegel Borcherds product $\Psi_{D,Siegel}$ whose divisor is equal to $G_D$ (up to multiplication by $\pm1$).  Let $\mathcal{G}_D$ denote the flat closure of $G_D$ in the Siegel moduli stack representing principally polarized abelian surfaces.

			The pullback to the Hilbert modular surface $M$ of a Humbert surface $G_{D'}$ under the morphism $\phi_D$ is a linear combination of Hirzebruch-Zagier divisors~\cite[p.215]{vdG}:
				\begin{equation} \label{pullback}
					\phi_D^*( G_{D'} ) = \sum_{\substack{ x \in \Z_{\geq0}\\ 
						x < \sqrt{DD'}\\ x^2 \equiv DD'(4)}} T_{(DD' - x^2)/4},
				\end{equation}
				for $D'$ a positive discriminant, and the same is true of the flat closures of these cycles in $\calM$. By Theorem 2.8 in~\cite{BY} we also know that the pullback to $M$ of the Borcherds product $\Psi_{D',\textup{Siegel}}$ is a Hilbert-Borcherds product with divisor (\ref{pullback}).  In our case, the denominators of the Igusa functions are powers of $\chi_{10}$, and $\chi_{10}=\Psi_{1,\textup{Siegel}}^2$ up to a power of $2$ and its divisor is $2G_1$.   So we can now express the quantity that we want to compute, $2\mathcal{G}_1.\calC\calM(K)$ as a sum 
				\[
					2\sum_{\substack{ x >0 \\ \frac{D-x^2}{4} \in \Z_{\ge 0}}}
					 \calC\calM(K).\calT_{(D - x^2)/4}.
				\]
			
		\subsection{Bruinier-Yang Conjecture}\label{subsec:BY}
			Let $K=F(\sqrt{\Delta})$ be a non-biquadratic quartic CM field $K$, let $\Phi$ be a CM type, and let $(\Ktilde,\Phitilde)$ be a fixed reflex field and type. Bruinier and Yang assume that $F=\Q(\sqrt{D})$ is a real quadratic field with $D$ a prime congruent to $1$ modulo $4$.	The real quadratic subfield of $\Ktilde$ is $\Ftilde=\Q(\sqrt{\Delta \Delta'})$, where $\Delta'$ is the real conjugate of $\Delta$. Let $\widetilde{D}$ be the norm of the relative discriminant of $K/F$ and let $d_{\widetilde{K}/\widetilde{F}}$ be the relative discriminant of ${\widetilde{K}/\widetilde{F}}$. Bruinier and Yang's conjectural formula for the arithmetic intersection number on the Hilbert moduli space is:
			\[
				\calC\calM(K).\calT_m = \frac{1}{2}\sum_{p} b_m(p) \log p,
			\]
			where 
			\[
				b_m(p) \log p = \sum_{\pp|p} 
						\sum_{t = \frac{n + m \sqrt{\Dtilde}}{2D} 
						\in d_{\Ktilde/\Ftilde}^{-1}, |n| < m\sqrt{\Dtilde} } 
						B_t(\pp).
			\]
			The ideals $\frak{p}|p$ are ideals of $\OO_{\Ftilde}$ and 
			\[
				B_t(\frak{p}) = \begin{cases}
									0 & \mbox{ if }\frak{p} \mbox{ is split in }
									\Ktilde\\ (\ord_{\pp}t + 1)
									\rho(t d_{\Ktilde/\Ftilde}\frak{p}^{-1} )
									\log |\frak{p}| & \mbox{ if }\frak{p}
									\mbox{ is not split in }\Ktilde
								\end{cases}
			\]
			where $\rho(\frak{a})= \#\{\frak{A} \subset \OO_{\Ktilde} \mid N_{\Ktilde/\Ftilde} \frak{A} = \frak{a}  \}$.  
					
			Yang has proved the conjecture~\cite{Yang1, Yang2} under the assumptions that $D \equiv 1 \mod{4}$ is prime, $\Delta\Delta' \equiv 1 \mod{4}$ is prime, and $\OO_K$ is a free $\OO_F$-module generated by $1$ and $\frac{1}{2} \left(w + \sqrt{\Delta}\right)$, for some $w \in \OO_{\Ftilde}$.  Note that Yang sets $\Dtilde=\Delta\Delta'$, which is equal to the norm of the relative discriminant of $K/F$ under his assumptions.  They are not necessarily equal in general, in particular, they differ for many of the fields listed in~\cite{vW}.  In our computations, we use the convention from~\cite{BY}, that $\Dtilde = \Norm_{F/\Q}(\Disc_{K/F})$ and we find that with this convention, the predicted arithmetic intersection number agrees with the computed values for the number of embeddings and the denominators of Igusa class polynomials in many cases.

	\section{Quaternion algebras and embeddings of CM fields}\label{sec:emb}
		In this section, we explain the connection given in ~\cite{GL} between denominators of Igusa class polynomials and certain embeddings of the ring of integers of $K$ into the matrix algebra $\M_2(\BBp)$, where $p$ is a prime number and $\BBp$ is the unique (up to isomorphism) quaternion algebra over $\Q$ which is ramified at only $p$ and $\infty$.
		
		Let $C$ be a smooth projective genus 2 curve such that $\Jac(C)$ has CM by $\calO_K$.  There is a number field $L$  such that all of the endomorphisms of $\Jac(C)$ are defined over $L$ and thus $\Jac(C)$ has everywhere good reduction.  It is observed in~\cite[\S 4.1]{GL} that if $K$ is a primitive CM field and $C$ has bad reduction at a rational prime $p$, then for a prime $\pp|p$ of $L$, the reduction of $\Jac(C)$ modulo $\pp$ is isomorphic over $\calO_L/\pp$ to a product of supersingular elliptic curves $E_1\times E_2$ with the product polarization. Moreover, we have that $E_1$ is isogenous to $E_2$ as otherwise 
		\[
			\End(E_1\times E_2) = \End(E_1) \times \End(E_2)
		\]
		contradicting $K\hookrightarrow\End^0(\Jac(C))$ with $K$ primitive.  Goren and Lauter prove that in fact the existence of a smooth projective genus 2 curve $C$ that has CM by $\OO_K$ and has bad reduction at $\pp$ is equivalent to the existence of an embedding 
		\[
			\iota:\calO_K \hookrightarrow \End(E_1\times E_2)
		\]
		such that the Rosati involution coming from the product polarization induces complex conjugation on $\OO_K$~\cite[Lemma 4.1.1 and Thm. 4.2.1]{GL}.

		\subsection{Relating $\End(E_1 \times E_2)$ to a subring of $M_2(\BBp)$}

			We view an element $f \in \End(E_1 \times E_2)$ as a matrix 
$\begin{pmatrix} f_{1,1} & f_{1,2} \\ f_{2,1} & f_{2,2}\end{pmatrix}$ 
where $f_{i,j}\in \Hom(E_j, E_i)$.  Then the composition of endomorphisms $f \circ g$ corresponds to multiplication of the matrices and the Rosati involution $f\mapsto f^{\vee}$ corresponds to
			\[
				\begin{pmatrix} 
					f_{1,1} & f_{1,2} \\ 
					f_{2,1} & f_{2,2}
				\end{pmatrix}
				\mapsto
				\begin{pmatrix} 
					f_{1,1}^{\vee} & f_{2,1}^{\vee} \\ 
					f_{1,2}^{\vee} & f_{2,2}^{\vee} 
				\end{pmatrix}
			\]

			Fix a supersingular elliptic curve $E/\Fbar_p$ and an isomorphism $\psi\colon \End(E) \to \calO$ where $\calO$ is a maximal order in $\BBp$.  Note that by using $\psi$ we can view elements of $\End(E)\otimes\Q$ as elements of $\BBp$.  Given an element $\phi \in \Hom(E, E')$, we obtain an embedding $\Hom(E', E) \to \End(E)$ by mapping $f \mapsto f\circ \phi$.  Thus we can view $\Hom(E', E)$ as an ideal of $\End(E)$ or, by using the isomorphism $\psi$, as a left ideal $I$ of $\calO$.  Note that if we replace $\phi$ by a different endomorphism $\phi'$, this corresponds to right multiplying $I$ by $\psi(\phi^{-1}\phi') \in \BBp$ (where $\phi^{-1}\phi' = \left(\phi^{\vee}\phi'\right)/\deg(\phi)$ can be viewed as an element of $\End(E)\otimes \Q$).  Thus a different choice of $\phi$ results in a different ideal, but one in the same \emph{ideal class}.  In fact, the map
			\begin{eqnarray*}
				\left\{\mbox{supersingular curves over }\Fbar_p\right\} & \to &
				\left\{\mbox{left ideal classes of }\calO\right\}\\
					E' & \mapsto & 
					\psi(\Hom(E', E)\phi) \mbox{ for some }\phi \in \Hom(E, E')
			\end{eqnarray*}
			is a bijection.  Given a morphism $\phi$ and the ideal $I =  \psi(\Hom(E', E)\phi)$, we can also identify the group $\Hom(E, E')$ with the ideal $I^{-1}$ by the map
			\[
			\Hom(E, E') \to \BBp, \quad g \mapsto \psi\left(\phi^{-1}g\right).
			\]
	
			We would like to view $\End(E')$ as a subring of $\BBp$ in a compatible way.  Let $\phi$ denote a fixed element of $\Hom(E, E')$. Then we obtain a map
			\[
			\End(E') \to \BBp, \quad g \mapsto \psi\left(\phi^{\vee}g\phi\right)/\left(\deg(\phi)\right).
			\]
			Since this map is an embedding, the image $\calO'$ is isomorphic to $\End(E')$ and therefore is a maximal order in $\BBp$.  One can check that $\calO'$ is the right order of $I := \psi(\Hom(E', E)\phi)$.  Thus, given a fixed $\phi\in\Hom(E,E')$ and a fixed isomorphism $\psi\colon \End(E)\to \calO\subseteq\BBp$, we can identify $\End(E \times E')$ with the subring of $M_2(\BBp)$,
			\[
				\begin{pmatrix} \calO & I \\ I^{-1} & \calO' \end{pmatrix},
			\]
		where $I$ and $\calO'$ are defined as above.  Under this identification the Rosati involution can be described as
			\[
				\begin{pmatrix}
					x & y \\
					z & w
				\end{pmatrix}^{\vee} =
				\begin{pmatrix}
					x^{\vee} & \Norm(I)z^{\vee} \\
					y^{\vee}/\Norm(I) & w^{\vee}
				\end{pmatrix},
			\]
		where $N(I) := \gcd\{\gamma\gamma^{\vee} \mid \forall \gamma \in I\}$ is the norm of $I$.  From this point forward, we will refer interchangeably to elements of $\End(E\times E')$ and elements of $\calR_{(E, E', \phi)} := \begin{pmatrix} \calO & I \\ I^{-1} & \calO' \end{pmatrix}$.

		\subsection{Isomorphisms of principally polarized CM abelian 
		varieties}\label{subsec:CMiso}
			Two principally polarized abelian varieties with complex multiplication $(A, \iota_1, \lambda_1)$, $(A, \iota_2, \lambda_2)$ are isomorphic either if $\iota_1$ and $\iota_2$ differ by an automorphism of $\OO_K$ or if there exists $\sigma \in \Aut(A)$ such that $\sigma^{-1}\iota_1\sigma = \iota_2$ and $\lambda_1 = \sigma^\vee \lambda_2 \sigma$.  
			We would like to describe the set of $\sigma \in \Aut(E \times E')$ such that conjugation by $\sigma$ preserves the polarization.  Preserving the polarization is equivalent to 
				\[
					\left(\sigma^{-1}\psi\sigma\right)^{\vee} = 
					\sigma^{-1}\psi^{\vee}\sigma
					\quad \mbox{ for all } \psi \in \End(E \times E').
				\]
			Thus $\sigma\sigma^{\vee} = \pm 1$.  Let $\begin{pmatrix} p & q\\ r& s \end{pmatrix} \in \calR_{(E, E', \phi)}$ be the element associated to $\sigma$.  The previous condition implies that $q = r = 0$ and both $p$ and $s$ are isomorphisms, or $p=s=0$ and $q$ and $r$ are isomorphisms.  Note that the latter possibility only occurs if $E$ and $E'$ are isomorphic.  Let $\calU_{(E, E', \phi)}$ denote the set of elements in $ \calR_{(E, E', \phi)}$ of this form, i.e. satisfying the condition that $\begin{pmatrix} p & q\\ r& s \end{pmatrix}\begin{pmatrix} p & q\\ r& s \end{pmatrix}^{\vee} = \pm 1$.

	\section{Counting the number of embeddings} \label{sec:Examples}
		Let $\calO_K$ denote the ring of integers of $K$.  If the real quadratic subfield $F$ of $K$ has class number $1$, then there exists an $\eta \in \calO_K$ such that $\calO_K = \calO_F[\eta]$.  Let $D$ denote the discriminant of $F$ and let $\omega := \frac{1}{2}\left(D + \sqrt{D}\right)$, a generator of $\calO_F$.  Denote $\Tr(\eta) = \alpha_0 + \alpha_1\omega$ and $\Norm(\eta) = \beta_0 + \beta_1\omega$.

		\begin{proposition}\label{prop:matrix-desc}
			Let $E, E'$ be supersingular curves over $\Fbar_p$.  Fix an isomorphism $\psi\colon \End(E) \to \calO$ where $\calO$ is a maximal order and an element $\phi \in \Hom(E, E')$.
			Determining an embedding 
			\[
				\iota \colon \calO_K \hookrightarrow \End(E \times E')
			\]
			is equivalent to giving two elements $\Lambda_1, \Lambda_2 \in \calR_{(E, E', \phi)}$ such that 
			\begin{align*}\label{eqn:lambdas}
				\Lambda_1\Lambda_2  = &\Lambda_2 \Lambda_1 \\
				\Lambda_1^2 - D\Lambda_1 + \frac{D^2 - D}{4}  = &0\\
				\tag{*} \Lambda_2 + \Lambda_2^{\vee}  = & 
					\alpha_0 + \alpha_1\Lambda_1 \\
				\Lambda_2\Lambda_2^{\vee}  = & 
						\beta_0 + \beta_1\Lambda_1.
			\end{align*}
		\end{proposition}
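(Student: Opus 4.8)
The plan is to set up the correspondence $\iota \leftrightarrow (\Lambda_1,\Lambda_2) = (\iota(\omega),\iota(\eta))$ and to show that, for the embeddings relevant to the embedding problem of Section~\ref{sec:emb} (those for which the Rosati involution induces complex conjugation on $\calO_K$), the four relations $(\ast)$ are exactly the defining conditions. I would organize everything around the presentation $\calO_K = \calO_F[\eta] = \Z[\omega,\eta]$, recording its three structural relations: the minimal polynomial $\omega^2 - D\omega + \tfrac{D^2-D}{4} = 0$ of $\omega$; the relative minimal polynomial $\eta^2 - (\alpha_0+\alpha_1\omega)\eta + (\beta_0+\beta_1\omega) = 0$ of $\eta$ over $\calO_F$, where $\Tr(\eta)=\alpha_0+\alpha_1\omega$ and $\Norm(\eta)=\beta_0+\beta_1\omega$; and the commutation $\omega\eta = \eta\omega$. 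I would also record that complex conjugation fixes $F$, so $\bar\omega = \omega$, while $\bar\eta = \Tr(\eta) - \eta = \alpha_0 + \alpha_1\omega - \eta$.

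For the forward direction, given such an $\iota$ I set $\Lambda_1 = \iota(\omega)$, $\Lambda_2 = \iota(\eta)$ and transport the relations: commutativity of $\omega,\eta$ gives the first relation, the minimal polynomial of $\omega$ gives the second, and the identity $\iota(x)^\vee = \iota(\bar x)$ applied at $x=\eta$ gives $\Lambda_2^\vee = \iota(\bar\eta) = \alpha_0 + \alpha_1\Lambda_1 - \Lambda_2$, i.e. the third relation, while $\Lambda_2\Lambda_2^\vee = \iota(\eta\bar\eta) = \iota(\Norm(\eta)) = \beta_0 + \beta_1\Lambda_1$ gives the fourth.

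The substance is the converse. Starting from $(\Lambda_1,\Lambda_2)$ satisfying $(\ast)$, I would first eliminate $\Lambda_2^\vee$ between the third and fourth relations, using the first relation to move $\Lambda_1$ past $\Lambda_2$, thereby recovering $\Lambda_2^2 - (\alpha_0+\alpha_1\Lambda_1)\Lambda_2 + \beta_0 + \beta_1\Lambda_1 = 0$. Together with the second relation and commutativity, this shows $\omega \mapsto \Lambda_1$, $\eta \mapsto \Lambda_2$ respects all defining relations of $\calO_K$, so it extends to a ring homomorphism $\iota\colon \calO_K \to \calR_{(E,E',\phi)}$; injectivity is immediate after tensoring with $\Q$, since $K$ is a field and $\iota(1)=1$. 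It then remains to verify $\iota(\bar x) = \iota(x)^\vee$ on the $\Z$-basis $\{1,\omega,\eta,\omega\eta\}$. The cases $x=1$ and $x=\eta$ follow from the relations as above, and $x = \omega\eta$ reduces to the case $x=\omega$ once one observes that the first and third relations force $\Lambda_1$ to commute with $\Lambda_2^\vee$.

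The one genuinely nontrivial point, which I expect to be the main obstacle, is the case $x=\omega$, i.e. $\Lambda_1^\vee = \Lambda_1$. Applying the anti-involution $\vee$ to the third relation (whose left side $\Lambda_2 + \Lambda_2^\vee$ is $\vee$-fixed) yields $\alpha_1\Lambda_1^\vee = \alpha_1\Lambda_1$, and applying it to the fourth relation (whose left side $\Lambda_2\Lambda_2^\vee$ is a $\vee$-fixed norm element) yields $\beta_1\Lambda_1^\vee = \beta_1\Lambda_1$; since $M_2(\BBp)$ is a $\Q$-algebra, either $\alpha_1\neq 0$ or $\beta_1\neq 0$ forces $\Lambda_1^\vee = \Lambda_1$. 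The key observation is that $\alpha_1 = \beta_1 = 0$ cannot occur for a primitive field: it would mean $\Tr(\eta),\Norm(\eta)\in\Z$, so $\eta$ would be quadratic over $\Q$ and $K = F(\eta)$ would be biquadratic, contradicting primitivity. With $\Lambda_1^\vee = \Lambda_1$ established, the Rosati compatibility is complete and the assignment $\iota \mapsto (\Lambda_1,\Lambda_2)$ is the desired bijection.
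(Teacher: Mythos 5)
Your proposal is correct, and its skeleton---sending $\iota \mapsto (\iota(\omega),\iota(\eta))$ and, conversely, checking that a pair $(\Lambda_1,\Lambda_2)$ satisfying the four conditions determines an injective ring homomorphism---is the same as the paper's; your injectivity argument (tensor with $\Q$ and use that $K$ is a field) is interchangeable with the paper's (a nontrivial kernel $J$ would make $\calO_K/J$ a finite group embedding into the torsion-free $\Z$-module $\calR_{(E,E',\phi)}$). Where you genuinely go beyond the paper is on the Rosati side. The paper's proof asserts the forward direction outright and calls well-definedness in the converse ``clear,'' never engaging with the fact that the third and fourth conditions encode compatibility with the Rosati involution: for an arbitrary ring embedding nothing forces the image to be stable under $\vee$, so the forward direction implicitly assumes (as you make explicit) that $\iota$ is an embedding for which Rosati induces complex conjugation---the notion relevant to the embedding problem of Section~\ref{sec:emb} and to the count immediately following the proposition. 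Correspondingly, in the converse direction the paper never verifies that the constructed embedding has this property. Your verification---checking $\iota(\overline{x})=\iota(x)^\vee$ on the basis $\{1,\omega,\eta,\omega\eta\}$, reducing everything to $\Lambda_1^\vee=\Lambda_1$, obtaining $\alpha_1(\Lambda_1-\Lambda_1^\vee)=0$ and $\beta_1(\Lambda_1-\Lambda_1^\vee)=0$ by applying the anti-involution $\vee$ to the third and fourth conditions, and excluding $\alpha_1=\beta_1=0$ because it would force $\eta$ to be quadratic over $\Q$ and hence $K$ biquadratic, contradicting primitivity---is correct, and it is precisely the content the paper leaves implicit. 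In short: same route, but your version closes a real gap in the published argument, and the use of primitivity to get $\Lambda_1^\vee=\Lambda_1$ is the one step that is not merely formal.
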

		
		\begin{proof}
			Let $\iota$ be an embedding.  Then $\Lambda_1= \iota(\omega)$ and $\Lambda_2 = \iota(\eta)$ satisfy the conditions above.  In the other direction, it is clear that the map that sends $\omega\mapsto \Lambda_1$ and $\eta \mapsto \Lambda_2$ gives a well-defined homomorphism from $\calO_K$ to $\calR_{(E, E', \phi)}$.  Therefore it remains to show that the map is injective.  Assume that the map is not injective  and let $J$ denote the kernel.  If $J$ is nontrivial then $\OO_K/J$ embeds into $\calR_{(E, E', \phi)}$.  However, since $\OO_K/J$ is a finite abelian group and $\BBp$ is a torsion-free $\Z$-module, this is impossible.
		\end{proof}
		
		We note that the idea behind this lemma is similar to the ideas in~\cite{GL} which used the embedding problem to obtain a bound on the primes dividing the denominator.  Goren and Lauter studied pairs of matrices $M_1$, $M_2$ in $\calR_{(E, E', \phi)}$ where $M_1$ satisfies the minimal polynomial of $\sqrt{D}$ and $M_2$ satisfies the minimal polynomial of $\sqrt{\Delta}$ where $K = F(\sqrt{\Delta})$.
		
		In light of Proposition~\ref{prop:matrix-desc} and Section~\ref{subsec:CMiso}, the number of pairs of $(E \times E', \iota)$, counted up to isomorphism, is equal to
		\[
			\sum_{\substack{\mbox{\tiny unordered pairs}\\E, E' }} \# \frac{\left\{ (\Lambda_1, \Lambda_2) \in \calR_{(E, E', \phi)}^2	\mbox{ satisfying \ref{eqn:lambdas}}\right\}}{ (\Lambda_1, \Lambda_2) \sim (U\Lambda_1U^{-1}, U\Lambda_2U^{-1})} \quad \mbox{ where }\phi \in\Hom(E, E') \mbox{ and }U \in \calU_{(E, E', \phi)}
		\]
		We use this formula to count the number of embeddings.  Our algorithm is sketched below and our implementation in $\texttt{Magma}$ can be found in Appendix~\ref{app:code}.  We make no claims on the running time of the algorithm or the implementation; the algorithm was only implemented to obtain the numerical evidence in Table~\ref{table:data}.

		\subsection{Algorithm for counting solutions to the embedding problem}
		\label{algorithm}
			\begin{enumerate}
				\item (Initialize variables) Input $p, D, \alpha_0, \alpha_1, \beta_0, \beta_1, \Dtilde$.
				\item (Create representation of $\BBp$) Let $\BBp = (a, -p)$ where $a$ is the maximal negative integer such that $\BBp$ is ramified only at $p$ and $\infty$.
				\item (Create representations of $\calR_{(E, E', \phi)}$ for each unordered pair of elliptic curves $(E, E')$) We represent each $\calR_{(E, E', \phi)}$ by a triple $(B_1, B_2, B_3)$ where $B_1$ is a basis for $\calO$, $B_2$ is a basis for $I$ and $B_3$ is a basis for $\calO'$.  For each isomorphism class of supersingular elliptic curves, let $\calO$ be a maximal order isomorphic to the endomorphism ring.  Then for each left ideal class of $\calO$ let $I$ be an integral representative and let $\calO'$ be the right order.  Take bases to give representatives of $\calR_{(E, E', \phi)}$ for each unordered pair $(E, E')$.
			
				\begin{enumerate}
					\item (Find possibilities for $\Lambda_1$) The condition on $\Lambda_1$ implies that it is of the form
					\[
						\begin{pmatrix} s_{1,1} & s_{1,2} \\ 
							s_{1,2}^{\vee}/N(I) & D - s_{1,1} \end{pmatrix},
					\]
					where $s_{1,1}$ is an integer in $[(D - \sqrt{D})/2, (D + \sqrt{D})/2]$ and $s_{1,2}$ is an element in $I$ with norm equal to 
					\[
						-s_{1,1}^2 + Ds_{1,1} - (D^2 - D)/4.
					\]
					There are only finitely many possibilities for each of these, so we iterate over all of them.
				
					\begin{enumerate}
			
						\item (Find possibilities for $\Lambda_2$)  Denote the entries of $\Lambda_2$ by $t_{i,j}$.  The condition on $\Lambda_2 + \Lambda_2^{\vee}$ implies that $t_{2,1}$ is determined by $t_{1,2}$ and $\Lambda_1$ and that $t_{1,1}$ and $t_{2,2}$ have fixed trace.  The condition on $\Lambda_2\Lambda_2^{\vee}$ implies that $t_{1,1}, t_{1,2}, t_{2,2}$ have bounded norm.  This gives only finitely many possibilities for $t_{1,1}, t_{1,2}, t_{2,2}$ and we iterate through them all, checking whether the pair $(\Lambda_1, \Lambda_2)$ satisfies all of the conditions in (\Star).  If (\Star) is satisfied, then keep $(\Lambda_1, \Lambda_2)$ in the list, otherwise discard the pair.
			
						\item (Identify elements that differ by conjugation or automorphisms of $\OO_K$) There are only finitely many elements in $\calU_{(E, E', \phi)}$.  For a fixed pair $(\Lambda_1, \Lambda_2)$, iterate through all elements $U$ in $\calU_{(E, E', \phi)}$ and all $\sigma \in \Aut(\OO_K)$ and remove all pairs $(\sigma(U\Lambda_1U^{-1}), \sigma(U\Lambda_2U^{-1}))$.

					\end{enumerate}
				\end{enumerate}
				\item (Count the embeddings) Count the number of pairs $(\Lambda_1, \Lambda_2)$.
			\end{enumerate}

	\section{Numerical Data}\label{sec:numerical-data}

			In Table~\ref{table:data} we examine the $13$ fields studied by van Wamelen~\cite{vW}. These $13$ fields are the only fields $K$ such that there exists a genus $2$ curve $C$ \emph{over $\Q$} with CM by $K$, and they are all quartic Galois cyclic extensions of $\Q$.  For each field $K$, we list
			\begin{enumerate}
 				\item the factorization of the rational sixth root of the denominator of $\frac{\chi_{12}^5}{\chi_{10}^6}(\tau)$, where $\tau$ is a CM-point of $K$,
 				\item the conjectural value of $2\phi^*G_1.\calC\calM(K)$ as predicted by the Bruinier-Yang formula (this was computed via a \texttt{Magma} 
					program), and 
				\item the number of solutions to the embedding problem, computed using Algorithm~\ref{algorithm} and the \texttt{Magma} code listed in Appendix~\ref{app:code}.
			\end{enumerate}
			
			The three values listed above should agree if the number of solutions to the embedding problem is counted \emph{with multiplicity}.  Unfortunately, our representation of embeddings does not count multiplicity.  However, Yang's proof suggests that the value $\left(\ord_{\pp} t + 1\right)/2$ is the multiplicity of an embedding, under certain conditions.  We write the Bruinier-Yang values as iterated exponents, where the outer exponent denotes  $\left(\ord_{\pp} t + 1\right)/2$.  If there is no outer exponent, then this value is $1$.  This is the case in most examples, so it appears that most of the embeddings have no multiplicity.  We investigate the cases where the Bruinier-Yang conjecture does not agree with the numerical data for the other columns in Section~\ref{subsec:disc-reasons}.
			
			\begin{remark} 
				Since all of the fields in~\cite{vW} are Galois, there is only one CM-type up to isomorphism.  Therefore we multiply the Bruinier-Yang formula by $1/2$ since the CM-cycle in~\cite{BY} is CM($K, \Phi) + $CM$(K, \Phi')$ where $\Phi$ is not the complex conjugate of $\Phi'$.
			\end{remark}
			\begin{remark} 
				In the data we present in Table~\ref{table:data}, we set $\Dtilde =\Norm_{F/\Q}(\Disc_{K/F})$ as in the original conjecture~\cite{BY}.  In his proofs, Yang sets $\Dtilde=\Delta\Delta'$ which is equal to the norm of the relative discriminant of $K/F$ under his assumptions, but which is not equal in general.  
			\end{remark}

		\subsection{Analysis of data}\label{subsec:disc-reasons}
			The first remark to make is that, in every example in the table, the powers of the primes in the denominators of the Igusa invariants match the number of solutions to the embedding problem, except for the power of $2$ in four examples and the power of $3$ in the last row. The power of $3$ in the last row appears to be accounted for by multiplicity.  The power of $2$ is different in the four rows 2, 4, 9, and 11, and we explain why that is accounted for by cancellation in more detail in Section~\ref{subsec:powersof2}.

			The rest of this section is devoted to explaining the discrepancies when comparing the Bruinier-Yang conjecture with the denominators of the Igusa class polynomials.  Note that the six rows which are marked with a ($*$) are cases where the Bruinier-Yang conjecture is already proved by Yang, since in those cases both $D$ and $\Dtilde$ are primes congruent to $1 \pmod{4}$.  The two rows with a ($**$) in the table are not covered by the conjecture, since in those cases the real quadratic subfield has discriminant which is divisible by a power of $2$.  Next we remark that in all the cases which are covered by the Bruinier-Yang conjecture, that is, where $D \equiv 1 \pmod{4}$ is prime, the numerical data confirms the conjecture and shows that the 2 quantities computed for each field match, except for the power of $2$ in the examples already mentioned above and the power of $23$ in row $11$.  We cannot explain the discrepancy in the power of $23$ in row $11$.
			\begin{landscape}
			\begin{table}[h]\label{table:data}
				\begin{tabular}{|c|c|c|c|c|}
					\hline
					$\Dtilde$ & CM field &  denominators of Igusa 
						& Bruinier-Yang formula& Number of embeddings\\
						& & invariants & (written multiplicatively) & 
						(written multiplicatively) \\
					\hline
					$5$ & $\Q\left(\zeta_5\right)^*$ 
						& $1$ 
						& $1$ 
						& $1$ \\
					\hline
					$2^5$ & $\Q\left(\sqrt{-2 + \sqrt{2}}\right)^{**}$ 
						& $1$ 
						& $(2^4)^{-1/2} (2^8)^{-3/2}$ 
						& $2$ \\
					\hline
					$13$ & $\Q\left(\sqrt{-13 + 2\sqrt{13}}\right)^*$ 
						& $1$ 
						& $1$ 
						& $1$\\
					\hline
					$2^6 5$ & $\Q\left(\sqrt{-5 + \sqrt{5}}\right)$ 
						& $1, \; 11^2$ 
						& $(2^2)^{3/2} 11^2$ 
						& $2^2 \cdot 11^2$\\
					\hline
					$5\cdot13^2$ & $\Q\left(\sqrt{-65 + 26\sqrt{5}}\right)$ 
						& $11^2, \; 31^2 41^2$ 
						& $11^2 31^2 41^2 $ 
						& $11^2 31^2 41^2$\\	
					\hline
					$29$ & $\Q\left(\sqrt{-29 + 2\sqrt{29}}\right)^*$ 
						& $5^2$ 
						& $5^2$ 
						& $5^2$\\
					\hline
					$5\cdot 17^2$ & $\Q\left(\sqrt{-85 + 34\sqrt{5}}\right)$ 
						& $71^2, \; 11^2 41^2 61^2$ 
						& $11^2 41^2 61^2 71^2$
						& $11^2 41^2 61^2 71^2$\\
					\hline 
					$37$ & $\Q\left(\sqrt{-37 + 6\sqrt{37}}\right)^*$ 
						& $3^2 11^2$ 
						& $3^2 11^2$ 
						& $3^2 11^2$\\
					\hline
					$2^5 5^2$ & $\Q\left(\sqrt{-10 + 5\sqrt{2}}\right)^{**}$ 
						& $7^2 23^2, \; 7^2 17^2 23^2$
						& $(2^{28})^{-3/2} (2^{2})^{-1} (2^{12})^{-1/2}$
						
						& $2^2 7^4 17^2 23^4$\\
						
						& & & $	\times (5^{32})^{1/2} 5^{14}7^6 $ &\\
						& &	& $	\times 17^4 23^4 31^2 41^2 79^2$ &\\
					\hline
					$5^2 13$ & $\Q\left(\sqrt{-65 + 10\sqrt{13}}\right)$ 
						& $3^2, \; 3^2 53^2$
						& $3^4 53^2$ 
						& $3^4 53^2$\\
					\hline
					$2^6 13$ & $\Q\left(\sqrt{-13 + 3\sqrt{13}}\right)$ 
						& $3^2 23^2,\; 3^2 23^2 131^2$ 
						& $(2^2)^{\frac{3}{2}} 3^4 23^2 131^2$ 
						& $2^2 3^4 23^4 131^2$\\
					\hline
					$53$ & $\Q\left(\sqrt{-53 + 2\sqrt{53}}\right)^*$ 
						& $17^2 29^2$ 
						& $17^2 29^2$
						& $17^2 29^2$\\
					\hline
					$61$ & $\Q\left(\sqrt{-61 + 6\sqrt{61}}\right)^*$ 
						& $3^4 5^2 41^2$ 
						& $\left(3^2\right)^2 5^2 41^2$ 
						& $3^2 5^2 41^2$ \\
					\hline
				\end{tabular}
				\caption{ Numerical data: precise descriptions of the values 
					computed can be found in Section~\ref{sec:numerical-data}.
					$\qquad\qquad\qquad\qquad\qquad\qquad$
					\Star~:~Conjecture has been proved in this case since 
					$\Dtilde \equiv 1 \pmod{4}$ is a prime 
					$\qquad\qquad\qquad\qquad\qquad\qquad$
					\Star\Star~:~Conjecture (as stated) does not include this 
					case;~\cite{BY} assumed that the real quadratic subfield has 
					prime discriminant}
			\end{table}
		\end{landscape}
				
		Other than the power of $23$, the discrepancies in the table occur in the four fields where $\Dtilde$ is highly divisible by $2$.  
In those cases, negative and/or fractional powers of $2$ appear in the Bruinier-Yang formula, plus other stray primes and powers in row nine.
To explain that, observe that in~\cite{Yang1, Yang2}, Yang shows that the multiplicity is equal to 
			\[
				\frac{1}{2}\left(\ord_p\frac{m^2\Dtilde - n^2}{4D} + 1\right)
			\] 
			and proves under his assumptions that this value is equal to 
			\[
				\frac{1}{2}\left(\ord_{\pp} \frac{n + m \sqrt{\Dtilde}}{2D} 
				+ 1\right).  
			\]
			In particular, $\ord_p\frac{m^2\Dtilde - n^2}{4D}$ is odd.  However, these statements no longer hold for those 4 examples in the table, and this leads to the fractional exponents.   

			When $D$ is also even in addition to $\Dtilde$ being even,  Bruinier and Yang did not conjecture a formula, so the different values do not give a counterexample to the conjecture.  However, in the aim of obtaining a formula for all CM-fields, we suggest a possible explanation for the differences and a possible correction.  In Yang's papers, he proves that one should consider all pairs $m,n$ such that $\frac{m^2\Dtilde - n^2}{4D}$ is a positive integer divisible by $p$.  This proof relies on the fact that $D \equiv 1\pmod 4$.  Note that if one imposes the additional condition that $8m + n \equiv 0 \pmod{16}$, then one obtains intersection numbers $1$ and $7^4 17^2 23^4$ for the fields $\Q\left(\sqrt{-2 + \sqrt{2}}\right)$ and $\Q\left(\sqrt{-10 + 5\sqrt{2}}\right)$ respectively.  Away from $2$, these numbers agree with both the number of embeddings and the denominators.  This suggests that when the real quadratic subfield has discriminant congruent to $0$ modulo $4$, additional congruence conditions on the pairs $m,n$ are needed to correct the intersection formula.

		\subsection{Powers of $2$}\label{subsec:powersof2}
			In four examples in the table, rows 2, 4, 9, and 11, the power of $2$ in the denominator does not match the number of embeddings at the prime $2$.  The reason for that is cancellation of powers of $2$ in the numerator and denominator of Igusa invariants.  It was noted in~\cite[\S 6.2]{GL} that the phenomenon of cancellation could occur, especially for the prime $2$.  It was explained there that this is due to the fact that there are no smooth superspecial genus 2 curves in characteristic 2, despite the fact that superspecial primes are prevalent.  For these four fields, the prime $2$ is ramified, and so it follows from a generalization of the work of Goren~\cite{Gor97} given in~\cite[Section 3]{GL10} that $2$ is a superspecial prime, and thus these curves have bad reduction at $2$.  So indeed we do find solutions to the embedding problem at $2$ in those cases, and the only reason that $2$ does not appear in the denominator is because of cancellation with powers of $2$ in the numerator.
		
	\section*{Acknowledgements}
This work was started as our group project at the WIN workshop ``Women In Numbers'', held at the Banff International Research Station in November 2008.  We thank BIRS for hosting us and for the excellent working conditions there.  We also thank the Fields Institute, PIMS, NSA, Microsoft Research, and the University of Calgary for their generous sponsorship of the conference. We thank Christophe Ritzenthaler for many detailed comments to improve the paper and the third author thanks Eyal Goren and Tonghai Yang for many discussions.

	\appendix
	
	\section{\texttt{Magma} code for counting the number of embeddings}
	\label{app:code}
	
	\begin{verbatim}
		//Given a list of 4 rational numbers, q1, q2, q3, q4
		//return the minimal positive integer N such that
		//Nq1, Nq2 Nq3, Nq4 are integers
		function Denom(L)

		    denominators := [];
		    for i := 1 to 4 do //hardcoded the length
		        Append(~denominators, Denominator(L[i]));
		    end for;
		    return LCM(denominators);

		end function;

		/********************************************************************/
		//Check whether the element x is in the integral span of elements 
		//in the basis B
		function InIdeal(x, B)
		    A := Matrix([Coordinates(B[1]), Coordinates(B[2]),
		                 Coordinates(B[3]), Coordinates(B[4])]);
		    v := Vector(Coordinates(x));
		    s := Solution(A,v);

		    retval := true;
		    for n := 1 to 4 do
		        if not IsIntegral(s[n]) then
		            retval := false;
		        end if;
		    end for;
		    return retval;

		end function;

		/********************************************************************/
		//Return the Rosati involution of M.
		//Notice this depends on the Norm of the ideal I
		function Rosati(M,NI)

		    return Matrix([ [ Conjugate(M[1][1]), Conjugate(M[2][1])*NI],
		                    [ Conjugate(M[1][2])/NI, Conjugate(M[2][2])]]);
		end function;
		\end{verbatim}
		\pagebreak
		\begin{verbatim}
		/********************************************************************/
		//Find all elements in the integral span of the basis B with 
		//norm = Normx and trace = Tracex
		function FixedNormAndTrace(Normx, Tracex, B)

		    xvals := [];
		    Q := Parent(B[1]);
		    isq := -Norm(Q.1);
		    jsq := -Norm(Q.2);
		    A := Transpose( Matrix([ Coordinates(B[1]), Coordinates(B[2]),
		                     		 Coordinates(B[3]), Coordinates(B[4])])); 

		    denom1 := Denom(A[2]);
		    denom2 := Denom(A[3]);
		    denom3 := Denom(A[4]);
		    x0 := Tracex/2;
		    temp3 := Normx - x0^2;

		    if temp3 ge 0 then
		        for num3 := Ceiling(-Sqrt(Rationals()!(temp3*denom3^2/(isq*jsq)))) 
		            to Floor(Sqrt(temp3*denom3^2/(isq*jsq))) do

		            temp2 := temp3 - (num3/denom3)^2*isq*jsq;
		            for num2 := Ceiling(-Sqrt(temp2*denom2^2/(-jsq)))
		                to Floor(Sqrt(temp2*denom2^2/(-jsq))) do

		                temp1 := temp2 - (num2/denom2)^2*(-jsq);
		                for num1 := Ceiling(-Sqrt(temp1*denom1^2/(-isq)))
		                    to Floor(Sqrt(temp1*denom1^2/(-isq))) do

		                    x := x0 + num1/denom1*Q.1 + 
		                         num2/denom2*Q.2 + num3/denom3*Q.3;
		                    if (Norm(x) eq Normx) and (Trace(x) eq Tracex) 
		                        and InIdeal(x, B) then
		                        Append(~xvals, x);
		                    end if;
		                end for;
		            end for;
		        end for;
		    end if;

		    return xvals;
		end function;
		\end{verbatim}
		\pagebreak
		\begin{verbatim}
		/********************************************************************/
		function RemoveConjElts(solns, E, alpha0, alpha1, beta0, beta1, D)

		    n := 1;
		    while n le #solns do

		        // Loop over automorphisms of E
		        for r in FixedNormAndTrace(1,  0, E[1]) cat 
		                 FixedNormAndTrace(1,  1, E[1]) cat
		                 FixedNormAndTrace(1, -1, E[1]) cat
		                 FixedNormAndTrace(1,  2, E[1]) cat
		                 FixedNormAndTrace(1, -2, E[1]) do

		            // Loop over automorphisms of E'
		            for s in FixedNormAndTrace(1,  0, E[3]) cat 
		                     FixedNormAndTrace(1,  1, E[3]) cat
		                     FixedNormAndTrace(1, -1, E[3]) cat
		                     FixedNormAndTrace(1,  2, E[3]) cat
		                     FixedNormAndTrace(1, -2, E[3]) do

		                //Calculating NormI so that we can compute Rosati involution
		                I := LeftIdeal(QuaternionOrder(E[1]), E[2]);
		                NormI := Norm(I);

		                U := Matrix([[r,0],[0,s]]);
		                Uinv := Matrix([[Conjugate(r), 0],[0, Conjugate(s)]]);

		                //Remove embedding which is conjugate to given embedding by U
		                // First check that U does not fix embedding
		                if not (solns[n][1]*U eq U*solns[n][1] and 
		                        solns[n][2]*U eq U*solns[n][2]) then

		                    Exclude(~solns, [U*solns[n][1]*Uinv, 
		                        U*solns[n][2]*Uinv]);

		                end if;

		                //Remove embedding which is conjugate to the complex conjugate 
		                //of given embedding by U
		                // First check that U does not fix complex conjugate of 
		                // embedding                
		                if not (solns[n][1]*U eq U*solns[n][1] and 
		                        solns[n][2]*U eq U*Rosati(solns[n][2], NormI)) then

		                    size := #solns;
		                    Exclude(~solns, [U*solns[n][1]*Uinv, 
		                                     U*Rosati(solns[n][2], NormI)*Uinv]);
		                end if;

		                //if E and E' are isomorphic then remove embedding which are 
		                // conjugate and have the order of E, E' switched
		                if E[1] eq E[2] then

		                    U := Matrix([[0, r], [s, 0]]);
		                    Uinv := Matrix([[0, Conjugate(s)], [Conjugate(r), 0]]);

		                    if  U*Uinv ne Matrix([[1,0],[0,1]]) or 
		                        Uinv*U ne Matrix([[1,0],[0,1]]) then
		                        print "Error - Uinv incorrect!!", r, s;
		                    end if;

		                    if not (solns[n][1]*U eq U*solns[n][1] and 
		                            solns[n][2]*U eq U*solns[n][2]) then

		                        size := #solns;
		                        Exclude(~solns, [U*solns[n][1]*Uinv,
		                                         U*solns[n][2]*Uinv]);

		                    end if;
		                    if not (solns[n][1]*U eq U*solns[n][1] and 
		                            solns[n][2]*U eq U*Rosati(solns[n][2], NormI)) then
		                        size := #solns;
		                        Exclude(~solns, [U*solns[n][1]*Uinv, 
		                                         U*Rosati(solns[n][2], NormI)*Uinv]);
		                    end if;
		                end if;
		            end for;
		        end for;

		        n := n + 1;

		    end while;

		    return solns;

		end function;
		\end{verbatim}
		\pagebreak
		\begin{verbatim}
		/********************************************************************/
		// find Lambda1, Lambda2 satsifying embedding conditions and such that 
		// entries have fixed norm
		function FindSolns(E, alpha0, alpha1, beta0, beta1, D,
		                    Norms12, Normt12, Normt22, Tracet22, 
		                    Normt11, Tracet11, s11);

		    solns := [];


		    R1 := QuaternionOrder(E[1]);
		    R2 := QuaternionOrder(E[3]);
		    I := LeftIdeal(R1, E[2]);

		    // Create list of possible values in I with Norm = Norms12
		    s12traces := [-Floor(Sqrt(4*Norms12))..Floor(Sqrt(4*Norms12))];
		    s12vals := [];
		    for T in s12traces do
		        s12vals := s12vals cat FixedNormAndTrace(Norms12, T, E[2]);
		    end for;

		    for s12 in s12vals do

		        // loop over values in R1 with Norm = Normt11 and Trace = Tracet11
		        for t11 in FixedNormAndTrace(Normt11, Tracet11, E[1]) do

		            // loop over values in R2 with Norm = Normt22 and Trace = Tracet22
		            for t22 in FixedNormAndTrace(Normt22, Tracet22, E[3]) do

		                // Create list of possible values in I with Norm = Normt12
		                t12traces := [-Floor(Sqrt(4*Normt12))..Floor(Sqrt(4*Normt12))];
		                t12vals := [];
		                for T in t12traces do
		                    t12vals := t12vals cat FixedNormAndTrace(Normt12, T, E[2]);
		                end for;

		                for t12 in t12vals do

		                    t21 := 1/Norm(I)*(alpha1*Conjugate(s12) - Conjugate(t12));

		                    //define Lambda1 = omega and Lambda2 = eta
		                    omega := Matrix([[s11, s12], 
		                                     [Conjugate(s12)/Norm(I), D - s11]]);
		                    eta := Matrix([[t11, t12], [t21, t22]]);

		                    //check omega and eta live in correct subring 
		                    // and satisfy all of the embedding conditions
		                    if InIdeal(s12, E[2]) and InIdeal(t12, E[2]) and
		                        (omega*eta eq eta*omega) and 
		                        eta + Rosati(eta, Norm(I)) eq alpha0 + alpha1*omega and
		                        eta*Rosati(eta, Norm(I)) eq beta0 + beta1*omega then

		                        Append(~solns, [omega, eta]); //add to list of solutions
		                    end if;
		                end for;
		            end for;
		        end for;
		    end for;
		    return solns;

		end function;



		/********************************************************************/
		// Lists all possible pairs of matrices Lambda1, Lambda2 in R_{E, E'} 
		// such that
		// Lambda1*Lambda2 = Lambda2*Lambda1
		// Lambda1^2 - D*Lambda1 + 1/4*(D^2 - D) = 0
		// Lambda2 + Lambda2^{\vee} = alpha0 + alpha1*Lambda1
		// Lambda2*Lambda2^{\vee} = beta0 + beta1*Lambda1

		function ListAllEmb(alpha0, alpha1, beta0, beta1, D, p)

		    // Define constants associated to CM field
		    F := QuadraticField(D);
		    D := Discriminant(F);

		    trw := D;  //Trace(omega)
		    nw := Integers()!(1/4*(D^2 - D));  //Norm(omega)

		    Dtilde := 1/16*D^4*alpha1^4 + 1/2*D^3*alpha0*alpha1^3 - 1/8*D^3*alpha1^4 
		        - D^3*alpha1^2*beta1 + 3/2*D^2*alpha0^2*alpha1^2 
		        - 1/2*D^2*alpha0*alpha1^3 - 4*D^2*alpha0*alpha1*beta1 
		        + 1/16*D^2*alpha1^4 - 2*D^2*alpha1^2*beta0 + D^2*alpha1^2*beta1 
		        + 4*D^2*beta1^2 + 2*D*alpha0^3*alpha1 - 1/2*D*alpha0^2*alpha1^2 
		        - 4*D*alpha0^2*beta1 - 8*D*alpha0*alpha1*beta0 
		        + 4*D*alpha0*alpha1*beta1 - 2*D*alpha1^2*beta0 + 16*D*beta0*beta1 
		        - 4*D*beta1^2 + alpha0^4 - 8*alpha0^2*beta0 + 16*beta0^2;

		    // Give a presentation of B_{p, infinity}
		    if p ne 2 then
		        jsq := -p;
		        isq := -1;
		        while (IsSquare(FiniteField(p)!isq)) or 
		            (#RamifiedPrimes(QuaternionAlgebra<Rationals() | isq, jsq>) 
		            gt 1) do
		            isq := isq - 1;
		        end while;
		    else
		        jsq := -1;
		        isq := -1;
		    end if;

		    QQ := Rationals();
		    Q<i, j, k> := QuaternionAlgebra<QQ | isq, jsq>;
		    ksq := -isq*jsq;


		    // Orders = { O : O maximal order isomorphic to End(E) for E supersingular 
		    // curve over Fpbar}
		    // Note: if E and E' are not isomorphic, but have isomorphic endomorphism   
		    // rings, Orders contains two different maximal orders, one is identified 
		    // with End(E), the other with End(E')
		    OO := MaximalOrder(Q);
		    LI := LeftIdealClasses(OO);
		    Orders := [];
		    for I in LI do
		        Append(~Orders, RightOrder(I));
		    end for;

		    // Ends = list of triples, one triple for each (ordered) pair (E, E')
		    // Let O be a maximal order which is isomorphic to End(E), I a left ideal
		    // of O which can be identified with Hom(E', E), and O' the right order of I
		    // which is identified with End(E').  Then the triple associated to (E, E')
		    // is (Basis(O), Basis(I), Basis(O')).  The correspondence between 
		    // (O, I, O') and (E, E') is explained more in the paper.
		    Ends := [];
		    for O in Orders do
		        for I in LeftIdealClasses(O) do
		            Append(~Ends, [Basis(O), Basis(I), Basis(RightOrder(I))]);
		        end for;
		    end for;

		    // initialize list of solutions
		    solns := [];
		    number := 0;
		    //look for Lambda1, Lambda2 in R_{(E, E')} satisfying above conditions
		    // this is equivalent to searching for s11, t11 in O, s12, t12 in I,
		    // s22, t22 in O'.  Then
		    // Lambda1 = 
		    // [s11 s12]
		    // [s12^{\vee} s22]
		    // Lambda2 = 
		    // [t11 t12]
		    // [t12^{\vee} t22]
		    for E in Ends do

		        NormI := Norm(LeftIdeal(QuaternionOrder(E[1]), E[2]));

		        //possible values for s11.  s22 = D - s11
		        s11vals := [Ceiling( (D - Sqrt(D))/2 )..Floor( (D + Sqrt(D))/2)];

		        Esolns := [];

		        for s11 in s11vals do

		            a := s11; //remnants of old notation
		            delta := -a^2 + a*trw - nw;
		            m := delta;
		            Tracet11 := alpha0 + alpha1*s11;  //conditions from Lambda + Lambda*
		            Tracet22 := alpha0 + alpha1*(D - s11);

		            for n := Ceiling(-Sqrt(Max(m^2*Dtilde - 4*D, 0))) to 
		                Floor(Sqrt(Max(m^2*Dtilde - 4*D, 0))) do

		                //parts of old notation
		                Normx := 1/(2*D)*(n - (1/4*a^2*D^2*alpha1^2 
		                - 1/4*a*D^3*alpha1^2 + 1/16*D^4*alpha1^2 - 1/4*a^2*D*alpha1^2 
		                + 1/4*a*D^2*alpha1^2 - 1/8*D^3*alpha1^2
		                + a^2*D*alpha1*alpha0 - a*D^2*alpha1*alpha0 
		                + 1/4*D^3*alpha1*alpha0 + 1/16*D^2*alpha1^2 
		                - 1/4*D^2*alpha1*alpha0 + a^2*alpha0^2 - a*D*alpha0^2
		                + 1/4*D^2*alpha0^2 - 2*a^2*D*beta1 + 2*a*D^2*beta1 
		                - 1/2*D^3*beta1 - 1/4*D*alpha0^2 - 2*a*D*beta1 
		                + 1/2*D^2*beta1 - 4*a^2*beta0 + 4*a*D*beta0
		                - D^2*beta0 - D*beta0));

		                Normu := delta*(beta0 + beta1*a) - delta*Normx;

		                Tracexuc := beta1*delta - (D - 2*a)/delta*Normu;

		                Normv := delta^2*Normx + delta*(D - 2*a)*Tracexuc 
		                            + (D - 2*a)^2*Normu;

		                //conditions coming from fixed n
		                Normt11 := Normx;
		                Normt12 := Normu*NormI/delta;
		                Normt22 := Normv/delta^2;
		                Norms12 := NormI*delta;

		                //check n can give a solution which is integral
		                if  IsIntegral((m^2*Dtilde - n^2)/(4*D*p)) and
		                    IsIntegral(Normt11) and IsIntegral(Normt12) and
		                    IsIntegral(Normt22) then 

		                    Esolns := Esolns cat FindSolns(E,
		                            alpha0, alpha1, beta0, beta1, D,
		                            Norms12, Normt12, Normt22, Tracet22, 
		                            Normt11, Tracet11, s11);
									// finds all solutions with 
		                            // entries of Lambda2 having
		                            // fixed norm and fixed
		                            // trace (for some entries)
		                end if;
		            end for;
		        end for;

		        // remove elements which are conjugate
		        Esolns := RemoveConjElts(Esolns, E, alpha0, alpha1, beta0, beta1, D);  

		        if E[1] eq E[2] then
		            number := number + #Esolns;
		        else
		            number := number + #Esolns/2;
		        end if;
		        Append(~solns, Esolns);
		    end for;

		    return solns, Ends, Integers()!number;

		end function;
	\end{verbatim}


	\begin{bibdiv}
		\begin{biblist}
	
			\bib{AM}{article}{
			   author={Atkin, A. O. L.},
			   author={Morain, F.},
			   title={Elliptic curves and primality proving},
			   journal={Math. Comp.},
			   volume={61},
			   date={1993},
			   number={203},
			   pages={29--68},
			   issn={0025-5718},
			   review={\MR{1199989 (93m:11136)}},
			}
			
			\bib{BBK}{article}{
			   author={Bruinier, Jan H.},
			   author={Burgos Gil, Jos{\'e} I.},
			   author={K{\"u}hn, Ulf},
			   title={Borcherds products and arithmetic intersection theory on 
					Hilbert modular surfaces},
			   journal={Duke Math. J.},
			   volume={139},
			   date={2007},
			   number={1},
			   pages={1--88},
			   issn={0012-7094},
			   review={\MR{2322676 (2008h:11059)}},
			}
	
			\bib{BY}{article}{
			   author={Bruinier, Jan Hendrik},
			   author={Yang, Tonghai},
			   title={CM-values of Hilbert modular functions},
			   journal={Invent. Math.},
			   volume={163},
			   date={2006},
			   number={2},
			   pages={229--288},
			   issn={0020-9910},
			   review={\MR{2207018 (2008b:11053)}},
			}
			
			\bib{EL}{misc}{
				author = {Eisentr\"ager, Kirsten},
				author = {Lauter, Kristin},
				title = {A CRT algorithm for constructing genus 2 curves over 
					finite fields},
				note ={SMF S\'eminaires et Congr\`es, Proceedings of Arithmetic, 
					Geometry, and Coding Theory, AGCT-10, Marseille (2005), {\bf 
					21}, 161--176,
				 {\texttt{arXiv:0405305}}},
			 
			}
			
			\bib{GHKRW}{article}{
			   author={Gaudry, P.},
			   author={Houtmann, T.},
			   author={Kohel, D.},
			   author={Ritzenthaler, C.},
			   author={Weng, A.},
			   title={The 2-adic CM method for genus 2 curves with application 
					to cryptography},
			   conference={
			      title={Advances in cryptology---ASIACRYPT 2006},
			   },
			   book={
			      series={Lecture Notes in Comput. Sci.},
			      volume={4284},
			      publisher={Springer},
			      place={Berlin},
			   },
			   date={2006},
			   pages={114--129},
			   review={\MR{2444631 (2009j:94110)}},
			}
			
			\bib{vdG}{book}{
			   author={van der Geer, Gerard},
			   title={Hilbert modular surfaces},
			   series={Ergebnisse der Mathematik und ihrer Grenzgebiete (3) 
				[Results in Mathematics and Related Areas (3)]},
			   volume={16},
			   publisher={Springer-Verlag},
			   place={Berlin},
			   date={1988},
			   pages={x+291},
			   isbn={3-540-17601-2},
			   review={\MR{930101 (89c:11073)}},
			}
	
			\bib{Goren}{book}{
			   author={Goren, Eyal Z.},
			   title={Lectures on Hilbert modular varieties and modular forms},
			   series={CRM Monograph Series},
			   volume={14},
			   note={With the assistance of Marc-Hubert Nicole},
			   publisher={American Mathematical Society},
			   place={Providence, RI},
			   date={2002},
			   pages={x+270},
			   isbn={0-8218-1995-X},
			   review={\MR{1863355 (2003c:11038)}},
			}
			
			\bib{Gor97}{article}{
			   author={Goren, Eyal Z.},
			   title={On certain reduction problems concerning abelian 
					surfaces},
			   journal={Manuscripta Math.},
			   volume={94},
			   date={1997},
			   number={1},
			   pages={33--43},
			   issn={0025-2611},
			   review={\MR{1468933 (98m:14048)}},
			}

			\bib{GL}{article}{
	   			author={Goren, Eyal Z.},
	   			author={Lauter, Kristin E.},
	   			title={Class invariants for quartic CM fields},
	   			language={English, with English and French summaries},
	   			journal={Ann. Inst. Fourier (Grenoble)},
	   			volume={57},
	   			date={2007},
	   			number={2},
	   			pages={457--480},
	   			issn={0373-0956},
	   			review={\MR{2310947 (2008i:11075)}},
			}
			
			\bib{GL10}{article}{
	   			author={Goren, Eyal Z.},
	   			author={Lauter, Kristin E.},
	   			title={Genus 2 curves with complex multiplication}, 
				journal={Preprint 2010, pp 51, \texttt{arXiv:1003.4759v1}}
			}
			
			\bib{GZ-SingularModuli}{article}{
			   author={Gross, Benedict H.},
			   author={Zagier, Don B.},
			   title={On singular moduli},
			   journal={J. Reine Angew. Math.},
			   volume={355},
			   date={1985},
			   pages={191--220},
			   issn={0075-4102},
			   review={\MR{772491 (86j:11041)}},
			}
			
			\bib{Grundman}{misc}{ 
				author = {Grundman, Helen},
				title={Hilbert Modular Variety Computations},
				note={Preprint, 2009}
			}
			
			\bib{HNR}{article}{
				author={Howe, Everett W.},
				author={Nart, Enric},
				author={Ritzenthaler, Christophe},
				title={Jacobians in isogeny classes of abelian surfaces over  
					finite fields},
				journal={Ann. Inst. Fourier},
				volume={59},
				date={2009},
				pages={239--289},
			}
			
			\bib{Howe}{article}{
			   author={Howe, Everett W.},
			   title={Principally polarized ordinary abelian varieties over 
					finite fields},
			   journal={Trans. Amer. Math. Soc.},
			   volume={347},
			   date={1995},
			   number={7},
			   pages={2361--2401},
			   issn={0002-9947},
			   review={\MR{1297531 (96i:11065)}},
			}
		
			\bib{IgusaGenus2}{article}{
			   author={Igusa, Jun-ichi},
			   title={Arithmetic Variety of Moduli for Genus Two},
				journal={Ann. Math.},
			   volume={72},
			   date={1960},
			   pages={612--649},
				}

			\bib{Igusa62}{article}{
			   author={Igusa, Jun-ichi},
			   title={On Siegel modular forms of genus two.},
			   journal={Amer. J. Math.},
			   volume={84},
			   date={1962},
			   pages={175--200},
			}

			\bib{Igusa1}{article}{
			   author={Igusa, Jun-ichi},
			   title={Modular forms and projective invariants},
			   journal={Amer. J. Math.},
			   volume={89},
			   date={1967},
			   pages={817--855},
			   issn={0002-9327},
			   review={\MR{0229643 (37 \#5217)}},
			}
			
			\bib{KW}{article}{
			   author={Kappe, Luise-Charlotte},
			   author={Warren, Bette},
			   title={An elementary test for the Galois group of a quartic 
					polynomial},
			   journal={Amer. Math. Monthly},
			   volume={96},
			   date={1989},
			   number={2},
			   pages={133--137},
			   issn={0002-9890},
			   review={\MR{992075 (90i:12006)}},
			}
			
			\bib{KRY}{book}{
			   author={Kudla, Stephen S.},
			   author={Rapoport, Michael},
			   author={Yang, Tonghai},
			   title={Modular forms and special cycles on Shimura curves},
			   series={Annals of Mathematics Studies},
			   volume={161},
			   publisher={Princeton University Press},
			   place={Princeton, NJ},
			   date={2006},
			   pages={x+373},
			   isbn={978-0-691-12551-0},
			   isbn={0-691-12551-1},
			   review={\MR{2220359 (2007i:11084)}},
			}
			
			\bib{Lauter}{misc}{
				author = {Lauter, Kristin},
				title = {Primes in the denominators of Igusa Class Polynomials},
				note = {Preprint 2003 \texttt{arXiv:0301240}}, 
			}
			
			\bib{Mestre}{article}{
			   author={Mestre, Jean-Fran{\c{c}}ois},
			   title={Construction de courbes de genre $2$ \`a partir de leurs 
				modules},
			   language={French},
			   conference={
			      title={Effective methods in algebraic geometry},
			      address={Castiglioncello},
			      date={1990},
			   },
			   book={
			      series={Progr. Math.},
			      volume={94},
			      publisher={Birkh\"auser Boston},
			      place={Boston, MA},
			   },
			   date={1991},
			   pages={313--334},
			   review={\MR{1106431 (92g:14022)}},
			}
			
			\bib{Shimura}{book}{
			   author={Shimura, Goro},
			   title={Abelian varieties with complex multiplication and modular
			   functions},
			   series={Princeton Mathematical Series},
			   volume={46},
			   publisher={Princeton University Press},
			   place={Princeton, NJ},
			   date={1998},
			   pages={xvi+218},
			   isbn={0-691-01656-9},
			   review={\MR{1492449 (99e:11076)}},
			}
			
			\bib{Spallek}{misc}{
				author = {Spallek, Anne-Monika},
				title = {Kurven vom {G}eschlecht 2 und ihre {A}nwendung in
				  {P}ublic-{K}ey-{K}ryptosystemen},
				date = {1994},
				note = { Universit\"at Gesamthochschule Essen, Ph. D. Thesis},
			}
			
			\bib{vW}{article}{
			   author={van Wamelen, Paul},
			   title={Examples of genus two CM curves defined over the 
					rationals},
			   journal={Math. Comp.},
			   volume={68},
			   date={1999},
			   number={225},
			   pages={307--320},
			   issn={0025-5718},
			   review={\MR{1609658 (99c:11079)}},
			}

			\bib{Weng}{article}{
			   author={Weng, Annegret},
			   title={Constructing hyperelliptic curves of genus 2 suitable for
			   cryptography},
			   journal={Math. Comp.},
			   volume={72},
			   date={2003},
			   number={241},
			   pages={435--458 (electronic)},
			   issn={0025-5718},
			   review={\MR{1933830 (2003i:14029)}},
			}
			
			\bib{Yang1}{misc}{
				author={Yang, Tonghai},
				title={An arithmetic intersection formula on Hilbert modular 
					surfaces},
				note={2007, Preprint, to appear in \emph{Amer. J. Math.}},
			}
			
			\bib{Yang2}{misc}{
				author={Yang, Tonghai},
				title={Arithmetic intersection on a Hilbert modular 
					surface and the Faltings height},
				note={2007, Preprint},
			}
			
		\end{biblist}
	\end{bibdiv}
	
\end{document}